\documentclass{amsart}
\usepackage{amssymb}
\usepackage{url}
\usepackage{mathrsfs}
\usepackage{booktabs}
\newtheorem{thm}{Theorem}
\newtheorem{cor}{Corollary}
\newtheorem{lem}{Lemma}

\theoremstyle{definition}

\theoremstyle{remark}

\begin{document}

\title[Perfect Numbers and Fibonacci Primes (II)]
{Perfect Numbers and Fibonacci Primes (II)}

\author{Tianxin Cai}
\address{Department of Mathematics, Zhejiang University, Hangzhou, 310027, People's Republic of China}
\email{txcai@zju.edu.cn}

\author{ LIUQUAN WANG}

\address{Department of Mathematics, National University of Singapore, Singapore, 119076, Singapore}
\email{mathlqwang@163.com}

\author{Yong Zhang}
\address{Department of Mathematics, Zhejiang University, Hangzhou, 310027, People's Republic of China}
\address{College of Mathematics and Computing Science, Changsha University of Science and Technology, Changsha, 410114, People's Republic of China}
\email{zhangyongzju@163.com}

\subjclass[2010]{Primary 11B83; Secondary 11D09, 11D25.}
\date{June 22, 2014}
\keywords{$F$-perfect number, Fibonacci prime, Lucas prime, twin primes}

\thanks{Project supported by the National Natural Science Foundation of China 11351002.}


\dedicatory{}


\begin{abstract}
In this paper, we study the diophantine equation
${{\sigma }_{2}}(n)-{{n}^{2}}=An+B$. We prove that except for finitely many computable solutions, all the solutions to this equation with $(A,B)=({{L}_{2m}},F_{2m}^{2}-1)$ are $n={{F}_{2k+1}}{{F}_{2k+2m+1}}$, where both ${{F}_{2k+1}}$  and ${{F}_{2k+2m+1}}$ are Fibonacci primes. Meanwhile, we show that the twin primes conjecture holds if and only if the equation ${{\sigma }_{2}}(n)-{{n}^{2}}=2n+5$ has infinitely many solutions.
\end{abstract}

\maketitle

\section{Introduction and Main Results}
 Since ancient Greek, a perfect number means a positive integers which  equals to the sum of its proper divisors. Equivalently, a positive integer $n$ is a perfect number if and only if
\[\sum\limits_{\begin{smallmatrix}
 d|n \\
 d<n
\end{smallmatrix}}{d}=n.\]
Let ${{\sigma }_{k}}(n)=\sum\limits_{d|n}{{{d}^{k}}}$, and usually ${{\sigma }_{1}}(n)$ is briefly written as $\sigma (n)$. The perfect numbers are exactly those integers $n$ which satisfy $\sigma (n)=2n.$ Euler proved that all even perfect numbers are $n={{2}^{p-1}}({{2}^{p}}-1)$, where both $p$ and ${{2}^{p}}-1$ are primes.

Some variants and generalizations of  perfect numbers have been studied by many mathematicians, including great Fermat, Descartes, Mersenne and Euler,  see \cite{Cai,Dickson,Guy}.

In spring 2013, the first author  defined the so-called $F$-perfect number $n$ which satisfies
\begin{equation}\label{Caieq}
\sum\limits_{\begin{smallmatrix}
 d|n \\
 d<n
\end{smallmatrix}}{{{d}^{2}}}=3n,
\end{equation}
and he called the original one $M$-perfect numbers (In China-Japan Number Theory Conference hold in Fukuoka, 2013, Japanese number theoriest
Kohji Matsumoto suggested to name them female perfect numbers and male perfect numbers).
  Cai, Chen and Zhang \cite{Cai} proved
\begin{thm}\label{Caithm}
All the solutions of $(\ref{Caieq})$ are $n={{F}_{2k-1}}{{F}_{2k+1}}$, where both ${{F}_{2k-1}}$  and ${{F}_{2k+1}}$ are Fibonacci primes.
\end{thm}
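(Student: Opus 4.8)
The plan is to rewrite the defining equation (\ref{Caieq}) as $\sigma_2(n) = n^2 + 3n$, i.e. the sum of squares of the \emph{proper} divisors of $n$ equals $3n$, and then to reduce the problem to a single quadratic Diophantine equation in the prime factorization of $n$. First I would dispose of the degenerate cases $n = 1$ and $n$ prime, where the left side is $0$ or $1$. Then I set up a bounding argument: if $p$ denotes the smallest prime factor of a composite solution $n$, the largest proper divisor of $n$ is $n/p$, so $(n/p)^2 \le 3n$ forces $n \le 3p^2$. Writing $n = pm$ with $m = n/p$, this gives $m \le 3p$, and since every prime factor of $m$ is at least $p$, if $m$ were composite it would satisfy $m \ge p^2$, forcing $p^2 \le 3p$, i.e. $p \le 3$. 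The upshot is that a composite solution with smallest prime $p \ge 5$ must be the prime power $p^2$ --- impossible since $1 + p^2 \ne 3p^2$ --- or a product $pq$ of two distinct primes $p < q$; the remaining possibility $p \in \{2,3\}$ leaves only $n \le 3p^2 \le 27$, which I check directly.

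For $n = pq$ the proper divisors are $1, p, q$, so (\ref{Caieq}) becomes the symmetric equation
\begin{equation}\label{markov}
1 + p^2 + q^2 = 3pq.
\end{equation}
The heart of the argument is to determine all positive integer solutions of (\ref{markov}), and here I would use Vieta jumping (root flipping and descent), exactly as for Markov-type equations. Viewing (\ref{markov}) as a quadratic $q^2 - 3pq + (p^2+1) = 0$ in $q$, the two roots $q, q'$ satisfy $q + q' = 3p$ and $qq' = p^2 + 1$; hence from any solution $(p,q)$ with $p < q$ one produces another positive integer solution $(q', p)$ with $q' = 3p - q$, whose largest coordinate is strictly smaller. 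Iterating this descent terminates at the base solution $(1,1)$, and running the construction in reverse via the recurrence $a_{k+1} = 3a_k - a_{k-1}$ generates precisely the odd-indexed Fibonacci numbers $1, 2, 5, 13, 34, \ldots$, since $F_{2k+3} = 3F_{2k+1} - F_{2k-1}$. I would conclude that the complete set of positive solutions of (\ref{markov}) is $(p,q) = (F_{2k-1}, F_{2k+1})$.

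Finally I would reassemble the two halves. Because $p$ and $q$ are the prime factors of $n$, the identification $(p,q) = (F_{2k-1}, F_{2k+1})$ forces both $F_{2k-1}$ and $F_{2k+1}$ to be Fibonacci primes, giving $n = F_{2k-1}F_{2k+1}$ as claimed; conversely, whenever both of these Fibonacci numbers are prime the pair solves (\ref{markov}) --- which one can also see directly from the Cassini-type identity $F_{2k-1}F_{2k+1} = F_{2k}^2 + 1$ --- so $n = F_{2k-1}F_{2k+1}$ is indeed a solution. I expect the main obstacle to be the bounding and structure step rather than the descent: one must argue cleanly that no composite solution can carry a square factor or a third prime, and verify the handful of small exceptional $n$, before the clean Vieta-jumping picture for (\ref{markov}) takes over.
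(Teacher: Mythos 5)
Your proposal is correct, and its key steps hold up: the largest-proper-divisor bound $(n/p)^2\le 3n$ combined with the smallest-prime-factor argument does reduce everything to $n=pq$ or $n\le 27$, and the Vieta-jumping descent does yield exactly the pairs $(F_{2k-1},F_{2k+1})$ as the positive solutions of $1+p^2+q^2=3pq$. However, your route is genuinely different from the paper's. The paper obtains this theorem as the case $m=1$ of a general two-step machine: first (Theorem 2) it shows, for arbitrary $(A,B)$, that any solution of $\sigma_2(n)-n^2=An+B$ with $n>(|A|+|B|)^3$ must be a product of two distinct primes, using the AM--GM inequality (if $n=abc$ with $1<a<b<c$ then $\sigma_2(n)-n^2\ge 3n^{4/3}>An+B$) together with a case analysis on $\omega(n)$ and the prime-power exponents; second (Theorem 3 with $m=1$) it completes the square to rewrite $p^2+q^2+1=3pq$ as $(2p-3q)^2-5q^2=-4$ and invokes the Pell-type Lemma 2, that every solution of $a^2-5b^2=-4$ is $(a,b)=(L_{2k+1},F_{2k+1})$, finishing with the Fibonacci--Lucas identities of Lemma 1 to recognize $p$ as $F_{2k+3}$ or $F_{2k-1}$. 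Your argument buys elementarity and self-containment: no Pell lemma, no Lucas numbers, and the recurrence $F_{2k+3}=3F_{2k+1}-F_{2k-1}$ emerges directly from the jump $q\mapsto 3p-q$. What it gives up is generality: both of your key steps exploit the specific pair $(A,B)=(3,0)$ (the divisor bound uses the coefficient $3$; the clean descent to $(1,1)$ uses the constant term $1$ and the symmetry of the equation), whereas the paper's structure theorem and Pell reduction work uniformly in $(A,B)$ and power all of Theorems 3--6. Two small points to make explicit in a final write-up: the jumped solution satisfies $0<q'=(p^2+1)/q<p$ only when $p>1$, so $(1,1)$ and $(1,2)$ must be treated as base cases of the descent; and the finite check of $n\le 27$ should record that the solution $n=10=F_3F_5$ it produces is an instance of the claimed form, not an exception.
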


There are 5 known $F$-perfect numbers. They also showed that for any pair of positive integers  $(a,b) \ne (2,3)$ and $a \ge 2$, the equation
\[\sum\limits_{\begin{smallmatrix}
 d|n \\
 d<n
\end{smallmatrix}}{{{d}^{a}}}=bn\]
has only finitely many solutions.

In this paper, we study
\begin{equation}\label{general}
{{\sigma }_{2}}(n)-{{n}^{2}}=An+B,
\end{equation}
where $A$ and $B$ are given intergers.

The main theorem we obtain is
\begin{thm}\label{ABthm}
$\mathrm{(i)}$  If $A=0$ and $B=1$, then all the solutions of $(\ref{general})$ are $n=p$, where $p$  is prime. \\
$\mathrm{(ii)}$  If $A=1$ and $B=1$, then  all the solutions of  $(\ref{general})$ are $n={{p}^{2}}$, where $p$ is  prime. \\
$\mathrm{(iii)}$ If $(A,B)\ne (0,1)$ or $(1,1)$, then except for finitely many computable solutions in the range $n\le {{(|A|+|B|)}^{3}}$, all the solutions of $(\ref{general})$  are $n=pq$, where $p<q$ are primes which satisfy the following equation
\begin{equation}\label{pqeq}
{{p}^{2}}+{{q}^{2}}+(1-B)=Apq.
\end{equation}
\end{thm}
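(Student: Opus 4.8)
The plan is to work throughout with the quantity
\[
S(n):=\sigma_2(n)-n^2=\sum_{\substack{d\mid n\\ d<n}}d^2,
\]
the sum of the squares of the \emph{proper} divisors of $n$, so that \eqref{general} reads $S(n)=An+B$. The first step is to record the value of $S$ on the three shapes that will turn out to matter: for a prime one has $S(p)=1$; for a prime square $S(p^2)=1+p^2$; and for a product of two distinct primes $S(pq)=(1+p^2)(1+q^2)-p^2q^2=1+p^2+q^2$. Substituting the last expression into $S(n)=Apq+B$ and rearranging gives exactly \eqref{pqeq}, so the content of part (iii) is really the assertion that, beyond the stated range, \emph{only} the shape $n=pq$ can occur; parts (i) and (ii) will drop out of the same analysis together with a short direct computation.

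The heart of the argument is a growth estimate that eliminates every $n$ with at least three prime factors counted with multiplicity. If $\Omega(n)\ge 3$ and $p$ denotes the least prime factor of $n$, then $n\ge p^{\,3}$, hence $p\le n^{1/3}$ and the largest proper divisor $n/p$ satisfies $n/p\ge n^{2/3}$. Since $n/p$ is itself a proper divisor, this yields the lower bound $S(n)\ge (n/p)^2\ge n^{4/3}$. On the other hand any solution of \eqref{general} satisfies $S(n)=An+B\le |A|\,n+|B|\le (|A|+|B|)\,n$ for $n\ge 1$. Combining the two inequalities gives $n^{4/3}\le (|A|+|B|)\,n$, that is $n\le (|A|+|B|)^3$. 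Thus every solution with $\Omega(n)\ge 3$ lies in the exceptional range, and for $n>(|A|+|B|)^3$ one must have $\Omega(n)\le 2$.

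It then remains to dispose of the shapes $n=p$ and $n=p^2$ for $n$ outside the range. For a prime $n$ the equation becomes $An+B=1$; when $(A,B)=(0,1)$ every prime works, giving part (i) (and $S(n)\ge 1+p^2\ge 5$ directly rules out composite $n$), while otherwise $A\ne 0$ forces the single value $n=(1-B)/A$, which is bounded by $1+|B|\le (|A|+|B|)^3$ and hence already lies in the range. For $n=p^2$ the equation becomes $(A-1)n=1-B$; when $(A,B)=(1,1)$ every prime square works and, since the relations $p^2+q^2=pq$ and $n+1=1$ admit no prime solutions, these are the only solutions, giving part (ii), while otherwise $A\ne 1$ pins $n$ to the single value $(1-B)/(A-1)$, again bounded by $1+|B|\le (|A|+|B|)^3$. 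Consequently, once $(A,B)\ne(0,1),(1,1)$ and $n>(|A|+|B|)^3$, the only surviving possibility is $n=pq$ with $p<q$ prime, which satisfies \eqref{pqeq}; all remaining solutions lie in $n\le (|A|+|B|)^3$ and are exhibited by a finite search.

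The growth estimate itself is routine; I expect the genuine obstacle to be the bookkeeping at the boundary. One must verify that the finitely many prime, prime-square, and $\Omega(n)\ge 3$ solutions really do all satisfy $n\le (|A|+|B|)^3$ uniformly in $(A,B)$, which forces one to track the edge cases of very small $|A|+|B|$, as well as degenerate pairs (for instance $A=B=0$, where $S(n)\ge 0$ leaves only $n=1$), by hand rather than by appeal to the clean inequality above. Making the threshold $(|A|+|B|)^3$ absorb every one of these exceptional solutions, and confirming that no $n=pq$ falling below it is overlooked, is where the care will be needed.
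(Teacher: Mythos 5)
Your proposal is correct, and its core finiteness argument takes a genuinely different route from the paper's. The paper eliminates large $n$ with many prime factors by writing $n=abc$ with $1<a<b<c$ \emph{distinct} and applying AM--GM to the three proper divisors $ab,bc,ca$, getting $\sigma_2(n)-n^2\ge 3n^{4/3}$; since this requires three distinct factors, the paper must afterwards treat separately the prime powers $p^{\alpha}$ with $3\le\alpha\le 5$ (through a divisibility argument showing $p^2\mid B-1$) and the shapes $p^2q$, $pq^2$, $p^2q^2$ (through ad hoc estimates). Your single inequality --- if $\Omega(n)\ge 3$ and $p$ is the least prime factor of $n$, then $n/p\ge n^{2/3}$ is a proper divisor, so $\sigma_2(n)-n^2\ge n^{4/3}$, whence $n\le(|A|+|B|)^3$ for any solution --- disposes of all of these shapes uniformly, with no distinctness hypothesis and no residual case analysis; the surviving shapes $n=1,p,p^2,pq$ are exactly the ones the paper also ends with, and your handling of them matches the paper's. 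The paper's route buys only the unused constant $3$ and the incidental divisibility information $p^2\mid B-1$, so your argument is a genuine simplification. Two boundary points that you flag but should actually carry out: for part (ii) your growth bound only confines $\Omega(n)\ge 3$ solutions to $n\le 8$, so part (ii) still needs the one-line check that $n=8$ fails (the sum of squares of its proper divisors is $21\ne 9$; the paper words this as ``if $n \le 8$, the only solution is $n=4$''); and the degenerate pair $(A,B)=(0,0)$, for which $n=1$ is a solution lying outside the vacuous range $n\le 0$, is an edge case that the theorem statement itself (and the paper's proof) also overlooks, so it costs you nothing.
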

For  special pairs $(A,B)$,  (\ref{pqeq}) are solvable. Let $m$ be any positive integer, we have
\begin{thm}\label{F1}
Except for finitely many computable solutions in the range  $n\le {{(L_{2m}+F_{2m}^2-1)}^{3}}$, all the solutions of
\begin{equation}\label{Feq}
{{\sigma }_{2}}(n)-{{n}^{2}}={{L}_{2m}}n-(F_{2m}^{2}-1)
\end{equation}
are\\
$\mathrm{(i)}$  $n={{F}_{2k+1}}{{F}_{2k+2m+1}}(k\ge 0)$, both ${{F}_{2k+1}}$ and ${{F}_{2k+2m+1}}$ are Fibonacci primes;\\
$\mathrm{(ii)}$ $n={{F}_{2k+1}}{{F}_{2m-2k-1}}(0\le k<m,k\ne \frac{m-1}{2})$, both ${{F}_{2k+1}}$ and ${{F}_{2m-2k-1}}$ are Fibonacci primes.
\end{thm}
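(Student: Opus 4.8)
The plan is to deduce Theorem~\ref{F1} from Theorem~\ref{ABthm}(iii) by explicitly solving the conic $(\ref{pqeq})$ in primes. First I would record that $(\ref{Feq})$ is the case $A=L_{2m}$, $B=-(F_{2m}^{2}-1)=1-F_{2m}^{2}$ of $(\ref{general})$. Since $A=L_{2m}\ge L_{2}=3$ for every positive integer $m$, we have $(A,B)\ne(0,1),(1,1)$, so Theorem~\ref{ABthm}(iii) applies: outside the range $n\le(L_{2m}+F_{2m}^{2}-1)^{3}$ every solution is $n=pq$ with $p<q$ primes satisfying $(\ref{pqeq})$. Because $1-B=F_{2m}^{2}$, equation $(\ref{pqeq})$ specializes to $p^{2}+q^{2}+F_{2m}^{2}=L_{2m}\,pq$, which I abbreviate by $(\star)$. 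Thus the problem reduces entirely to finding the prime solutions $p<q$ of $(\star)$.

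To solve $(\star)$ I would view it as a quadratic in $q$ and use the identity $L_{2m}^{2}-4=5F_{2m}^{2}$ to write
\[
q=\frac{L_{2m}\,p\pm F_{2m}\sqrt{5p^{2}-4}}{2}.
\]
Integrality of $q$ forces $5p^{2}-4$ to be a perfect square. Here I would invoke the classical characterization, a consequence of the Pell equation $t^{2}-5p^{2}=-4$ whose positive solutions are exactly $(t,p)=(L_{2j+1},F_{2j+1})$: a positive integer $p$ satisfies $5p^{2}-4=\square$ if and only if $p=F_{2j+1}$ for some $j\ge0$, with $\sqrt{5p^{2}-4}=L_{2j+1}$. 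Substituting and applying the addition formulas $2F_{r+s}=F_{r}L_{s}+L_{r}F_{s}$ and $2F_{r-s}=(-1)^{s}(F_{r}L_{s}-L_{r}F_{s})$ with $r=2j+1$, $s=2m$, the two roots become $q_{+}=F_{2j+2m+1}$ and $q_{-}=F_{2j+1-2m}$. Since $2j+1-2m$ is odd, $q_{-}=F_{|2j+1-2m|}>0$ in every case, so both branches are genuine positive solutions, and the converse inclusion is automatic since $q_{\pm}$ are by construction the roots of $q^{2}-L_{2m}pq+(p^{2}+F_{2m}^{2})=0$.

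It remains to impose $p,q$ prime and $p<q$. The branch $q_{+}$ produces pairs of odd-indexed Fibonacci numbers whose indices differ by $2m$, namely family (i); the branch $q_{-}$ produces pairs whose indices sum to $2m$ when $j<m$ (family (ii)) and folds back into family (i) when $j\ge m$. Requiring the two Fibonacci factors to be prime is exactly the Fibonacci-prime condition in the statement, and the degenerate case $p=q$, i.e.\ $F_{2k+1}=F_{2m-2k-1}$ forcing $k=\tfrac{m-1}{2}$, must be discarded, which accounts for the excluded value in (ii). As a sanity check one may verify $(\star)$ directly by Binet: using $5F_{a}^{2}=L_{2a}+2$ for odd $a$, $5F_{a}F_{b}=L_{a+b}+L_{b-a}$, and $L_{2a}+L_{2b}=L_{a+b}L_{b-a}$ for $b-a$ even, the quantity $5(F_{a}^{2}+F_{b}^{2}-L_{2m}F_{a}F_{b})$ collapses to $4-L_{2m}^{2}=-5F_{2m}^{2}$ precisely when $|b-a|=2m$ or $a+b=2m$.

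I expect the algebra to be routine; the genuine obstacles are elsewhere. The true engine of the proof is the Pell-type characterization of $5p^{2}-4=\square$, which is what forces the entire solution set into Fibonacci form and must be cited or reproved carefully. Secondary but error-prone points are the sign bookkeeping in passing from $(\ref{Feq})$ to $(\star)$ (note that $B$ is negative), and the edge cases arising from $F_{1}=F_{2}=1$ and from small indices: these are never prime and so are silently removed by the Fibonacci-prime requirement, but one must confirm against the exceptional range $n\le(L_{2m}+F_{2m}^{2}-1)^{3}$ that no true solution is lost and that no product is double-counted between families (i) and (ii).
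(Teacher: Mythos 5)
Your proposal is correct and is essentially the paper's own argument: both reduce via Theorem~\ref{ABthm}(iii) to the conic $(\ref{pqeq})$, use $L_{2m}^{2}-4=5F_{2m}^{2}$ to arrive at the Pell equation $u^{2}-5v^{2}=-4$, invoke Lemma~\ref{Pell}, and finish with the Fibonacci--Lucas addition formulas of Lemma~\ref{FLid}. The only differences are cosmetic: you solve the quadratic in $q$ via its discriminant (parametrizing $p=F_{2j+1}$) and use negative-index Fibonacci conventions, whereas the paper completes the square in $2p-L_{2m}q$ (parametrizing $q=F_{2k+1}$) and splits into the cases $2k+1\gtrless 2m$.
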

We call those solutions of (\ref{Feq}) which are not in the form of (i) or (ii) exceptional solutions.
For $1 \le m \le 5$, by  using Mathematica, we find that there are no  exceptional solutions.  In particular, if $m=1$, (\ref{Feq}) becomes (\ref{Caieq}), and Theorem \ref{F1} becomes Theorem \ref{Caithm}.  For $1 \le m \le 3$, by the list of known Fibonacci primes \cite{Fprime},  we could find  those known solutions.

\begin{table}[h]
\centering
\begin{tabular}{|c|c|c| }
\hline
  $m$ &  ${{L}_{2m}}n-(F_{2m}^{2}-1)$ &   $n$
\\ \hline
  1 &  $3n$  & ${{F}_{3}}{{F}_{5}},{{F}_{5}}{{F}_{7}},{{F}_{11}}{{F}_{13}},{{F}_{431}}{{F}_{433}},{{F}_{569}}{{F}_{571}}$    \\
\hline
  2 & $ 7n-8 $    & ${{F}_{3}}{{F}_{7}}, {{F}_{7}}{{F}_{11}}, {{F}_{13}}{{F}_{17}}, {{F}_{43}}{{F}_{47}}$                                            \\
\hline
3  &  $18n-63 $ &  ${{F}_{5}}{{F}_{11}}, {{F}_{7}}{{F}_{13}}, {{F}_{11}}{{F}_{17}},$ \\
&     & ${{F}_{17}}{{F}_{23}}, {{F}_{23}}{{F}_{29}},  {{F}_{131}}{{F}_{137}}$ \\
\hline
\end{tabular}
\caption{Solutions of (\ref{Feq}) for $1 \le m \le 3$.}
\label{L1solution}
\end{table}
The two solutions ${{F}_{431}}{{F}_{433}},{{F}_{569}}{{F}_{571}}$ in the above table has 180  and 238 digits, respectively.

The exceptional solutions may exist for larger $m$. For example, if $m=6$,  (\ref{Feq}) becomes
\[{{\sigma }_{2}}(n)-{{n}^{2}}=322n-20735.\]
There is exactly one exceptional solution $n=1755={{3}^{3}}\cdot 5\cdot 13.$ For other $m$, the exceptional solution may exist as well, but it would be  difficult  to find those  $m$ and $n$ such that  (\ref{Feq}) has no exceptional solutions.

Meanwhile, we have
\begin{thm}\label{L1}
Except for finitely many computable solutions in the range $n\le$ \\
$ {{(L_{2m}^{2}+{{L}_{2m}}-3)}^{3}}$, all the solutions of
\begin{equation}\label{L1eq}
{{\sigma }_{2}}(n)-{{n}^{2}}={{L}_{2m}}n+(L_{2m}^{2}-3)
\end{equation}
are $n={{L}_{2k-1}}{{L}_{2k+2m-1}}$, where both ${{L}_{2k-1}}$ and ${{L}_{2k+2m-1}}$ are Lucas primes.
\end{thm}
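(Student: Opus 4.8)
The plan is to derive Theorem \ref{L1} from Theorem \ref{ABthm}(iii) together with a complete description of the underlying quadratic Diophantine equation in Lucas numbers. Here $A=L_{2m}\ge 3$ and $B=L_{2m}^{2}-3$, so $(A,B)\neq(0,1),(1,1)$ and we are in case (iii); hence, apart from the finitely many solutions with $n\le (L_{2m}^{2}+L_{2m}-3)^{3}$, every solution of (\ref{L1eq}) is $n=pq$ with $p<q$ primes satisfying (\ref{pqeq}). For these values of $A,B$ equation (\ref{pqeq}) becomes
\[
p^{2}+q^{2}-L_{2m}\,pq=L_{2m}^{2}-4 ,
\]
which I denote by $(\star)$; writing $D=L_{2m}^{2}-4=5F_{2m}^{2}$, the whole problem reduces to classifying the positive integer solutions of $(\star)$ and then imposing primality.

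First I would verify that the proposed pairs work, i.e.\ that $(x,y)=(L_{2k-1},L_{2k+2m-1})$ solves $(\star)$ for every $k\ge 1$. This is a direct computation from the standard identities $L_aL_b=L_{a+b}+(-1)^{b}L_{a-b}$ and $L_t^{2}=L_{2t}+2(-1)^{t}$: expanding $x^{2}+y^{2}-L_{2m}xy$ for $x=L_{j}$, $y=L_{j+2m}$ collapses to $(-1)^{j+1}(L_{2m}^{2}-4)$, which equals $D$ precisely when $j$ is odd, and equals $-D$ when $j$ is even. This explains structurally why only the odd indices $j=2k-1$ occur (the even ones solve the sign-flipped equation instead).

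The heart of the matter is the converse: every solution of $(\star)$ with $1\le x\le y$ has this shape. I would run a Vieta-jumping descent. For a solution $(x,y)$ the quadratic $t^{2}-L_{2m}x\,t+(x^{2}-D)=0$ has roots $y$ and $y^{*}=L_{2m}x-y=(x^{2}-D)/y$, so $(x,y^{*})$ is again an integer solution, and symmetrically one may jump in the other coordinate. Using $L_{2m}L_{j}=L_{j+2m}+L_{j-2m}$ one checks that the upward jump carries $(L_{j},L_{j+2m})$ to $(L_{j+2m},L_{j+4m})$ and the downward jump carries it to $(L_{j-2m},L_{j})$, so each orbit is an index-chain with common difference $2m$. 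The descent terminates at a reduced solution, namely one whose downward jump leaves the positive cone ($x^{2}-D\le 0$, forcing $y^{*}\le 0$); these reduced solutions are exactly the seeds $(L_{j},L_{j+2m})$ with $j$ odd and $1\le j\le 2m-1$. The observation that $(L_{2m},1)$ is the fundamental solution of the Pell equation $a^{2}-Db^{2}=4$ is what guarantees that the Vieta moves generate the entire solution set and miss no orbit; the $m$ admissible odd residues $j\bmod 2m$ give the $m$ orbits, whose union is precisely $\{(L_{2k-1},L_{2k+2m-1}):k\ge 1\}$.

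Finally I would reinsert the arithmetic constraints. Since $2-L_{2m}<0$ there is no solution with $x=y$, and as $L$ is strictly increasing on positive indices we automatically get $p=L_{2k-1}<L_{2k+2m-1}=q$; requiring $p,q$ prime then forces both $L_{2k-1}$ and $L_{2k+2m-1}$ to be Lucas primes, which yields the asserted family. (In particular the seed $j=1$ is discarded because $L_1=1$ is not prime, consistent with the fact that a single prime $n$ gives $\sigma_2(n)-n^{2}=1\neq L_{2m}n+L_{2m}^{2}-3$.) I expect the classification step to be the main obstacle: proving that the descent really terminates at the listed seeds and, crucially, that \emph{no} positive solution escapes the Lucas orbits. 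Managing the several orbits at once, pinning down the reduced range $1\le x\le y$ with $y$ bounded in terms of $L_{2m}$ and $\sqrt{D}$, and controlling the sign of $y^{*}=L_{2m}x-y$ at the bottom of each chain are the delicate points; the structural reason everything closes up is that the discriminant of the form $x^{2}-L_{2m}xy+y^{2}$ equals the right-hand side $D$ and that $(L_{2m},1)$ is the fundamental unit, but turning this remark into a clean, self-contained descent is where the real work lies.
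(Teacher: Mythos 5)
Your reduction to Theorem \ref{ABthm}(iii), the form of $(\star)$, the verification that $(L_{2k-1},L_{2k+2m-1})$ solves it, and the description of the Vieta orbit structure are all correct (and your sign observation about odd versus even indices checks out). But the theorem's entire content is the converse, and there your proposal asserts rather than proves the decisive claim: ``these reduced solutions are exactly the seeds $(L_j,L_{j+2m})$ with $j$ odd and $1\le j\le 2m-1$.'' You concede yourself that this is ``where the real work lies,'' so what you have is the easy direction plus a framework, not a proof. The missing step cannot be waved through by the fundamental-unit remark you make: the Pell equation you cite, $a^2-Db^2=4$ with $D=5F_{2m}^2$, governs the automorphisms of the form (i.e.\ the jumps along an orbit), and says nothing about \emph{which} reduced pairs exist, which is where all the difficulty sits.

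To close the gap you would have to decide, for each candidate $x\le\sqrt{D}$, when the Vieta quadratic $t^2-L_{2m}x\,t+(x^2-D)$ has an integer root. Its discriminant is $(L_{2m}^2-4)x^2+4D=5F_{2m}^2(x^2+4)$, so integrality forces $x^2+4=5w^2$, i.e.\ $x^2-5w^2=-4$, and you then need the full characterization of solutions of this equation as $(x,w)=(L_{2k+1},F_{2k+1})$ --- exactly Lemma \ref{Pell} of the paper, which your proposal never invokes. (Note this argument applies to \emph{any} solution, not just reduced ones, which shows the Vieta layer is actually superfluous: once both coordinates are known to be odd-indexed Lucas numbers, identity (ii) of Lemma \ref{FLid} pins down the index gap $2m$ at once.) That is in effect what the paper does, more directly: substituting $A=L_{2m}$, $B=L_{2m}^2-3$ into (\ref{cond}) gives $(2p-L_{2m}q)^2-5F_{2m}^2q^2=20F_{2m}^2$, forcing $2p-L_{2m}q=5F_{2m}u$ and $q^2-5u^2=-4$; Lemma \ref{Pell} then yields $q=L_{2k+1}$, $u=\pm F_{2k+1}$, and identities (ii) and (v) of Lemma \ref{FLid} give $p=L_{2k+2m+1}$ in the plus case, while the minus case gives $p=-L_{2m-2k-1}<0$ (impossible) or a relabeling of the same family. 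So your route, once completed, necessarily funnels through the same Pell lemma and identities as the paper's proof; as submitted, the absence of that ingredient is a genuine gap, not a stylistic difference.
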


\begin{thm}\label{L2}
Except for finitely many computable solutions in the range $n\le$ \\
$ (L_{2m}+L_{2m}^{2}-5)^{3}$, all the solutions of
\begin{equation}\label{L2eq}
{{\sigma }_{2}}(n)-{{n}^{2}}={{L}_{2m}}n-(L_{2m}^{2}-5)
\end{equation}
are\\
$\mathrm{(i)}$  $n={{L}_{2k}}{{L}_{2k+2m}}(k\ge 0)$,  both ${{L}_{2k}}$ and ${{L}_{2k+2m}}$ are  Lucas primes.\\
$\mathrm{(ii)}$  $n={{L}_{2k}}{{L}_{2m-2k}}(0\le k\le m,k\ne \frac{m}{2})$, both ${{L}_{2k}}$ and ${{L}_{2m-2k}}$ are Lucas primes.
\end{thm}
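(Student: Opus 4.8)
The plan is to deduce Theorem~\ref{L2} from Theorem~\ref{ABthm}(iii) by solving the resulting conic in primes. Here $A=L_{2m}$ and $B=-(L_{2m}^{2}-5)$, so $A=L_{2m}\ge L_{2}=3$ and in particular $(A,B)\ne(0,1),(1,1)$; moreover $1-B=L_{2m}^{2}-4>0$, whence $|A|+|B|=L_{2m}+(L_{2m}^{2}-5)$ and the stated range is exactly the one supplied by Theorem~\ref{ABthm}(iii). Thus, apart from finitely many explicitly boundable $n\le(L_{2m}+L_{2m}^{2}-5)^{3}$, every solution is $n=pq$ with $p<q$ prime and, by (\ref{pqeq}),
\[
p^{2}+q^{2}+(L_{2m}^{2}-4)=L_{2m}\,pq .
\]
The first decisive ingredient is the Lucas identity $L_{2m}^{2}-4=5F_{2m}^{2}$ (the case $n=2m$ of $L_{n}^{2}-5F_{n}^{2}=4(-1)^{n}$), which recasts the equation as $p^{2}-L_{2m}pq+q^{2}=-5F_{2m}^{2}$.

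Next I would solve this equation in positive integers $x\le y$ and afterwards set $p=x,\ q=y$. Viewing it as a quadratic in $y$, the discriminant equals $L_{2m}^{2}x^{2}-4x^{2}-20F_{2m}^{2}=5F_{2m}^{2}(x^{2}-4)$, so $y\in\mathbb{Z}$ forces $5(x^{2}-4)$ to be a perfect square; writing it as $(5v)^{2}$ produces the auxiliary Pell equation $x^{2}-5v^{2}=4$. Its nonnegative solutions are precisely $(x,v)=(L_{2k},F_{2k})$, $k\ge0$, generated by $x_{k+1}=3x_{k}-x_{k-1}$, and establishing this completeness is the technical heart of the argument. Substituting $x=L_{2k}$, so that $\sqrt{5(x^{2}-4)}=5F_{2k}$, and invoking the companion identities $L_{2m}L_{2k}=L_{2m+2k}+L_{2m-2k}$ and $5F_{2m}F_{2k}=L_{2m+2k}-L_{2m-2k}$, I obtain
\[
y=\frac{L_{2m}L_{2k}\pm 5F_{2m}F_{2k}}{2}\in\{\, L_{2m+2k},\ L_{2m-2k}\,\},
\]
which are exactly the two families asserted in the theorem.

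Finally I would impose the primality and ordering conditions. Taking $n=pq=L_{2k}L_{2m+2k}$ (always with $L_{2k}<L_{2m+2k}$) gives family (i), valid precisely when both factors are Lucas primes; taking $n=L_{2k}L_{2m-2k}$ gives family (ii), where $p<q$ forces $L_{2k}\ne L_{2m-2k}$ and hence $k\ne m/2$, the diagonal case $x=y=L_{m}$ (possible only for even $m$) being genuinely excluded. The converse—that each such $pq$ does satisfy (\ref{L2eq})—follows from $\sigma_{2}(pq)-(pq)^{2}=1+p^{2}+q^{2}$ together with the same identities. The main obstacle I anticipate is not any single identity but the bookkeeping surrounding completeness and exceptions: proving that $x^{2}-5v^{2}=4$ has no solutions beyond the $(L_{2k},F_{2k})$, verifying that the symmetry $k\leftrightarrow m-k$ in family (ii) and the coincidence of the two families at $k=0$ cause no omission or double counting, and confirming that every genuinely exceptional $n$ (for instance those with more than two prime factors) lies in the computable range $n\le(L_{2m}+L_{2m}^{2}-5)^{3}$, so that it is absorbed into the finite exceptional set already provided by Theorem~\ref{ABthm}(iii).
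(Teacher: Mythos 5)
Your proposal is correct and follows essentially the same route as the paper: reduce via Theorem \ref{ABthm}(iii) to $p^{2}+q^{2}+(L_{2m}^{2}-4)=L_{2m}pq$, use $L_{2m}^{2}-4=5F_{2m}^{2}$ to arrive at the Pell equation $x^{2}-5v^{2}=4$ with solutions $(L_{2k},F_{2k})$, and then identify the cofactor as $L_{2m+2k}$ or $L_{2m-2k}$ via the Lucas addition identities (Lemma \ref{FLid}), including the exclusion $k\ne m/2$ from the distinctness of $p$ and $q$. Your discriminant computation is just the paper's substitution $2p-L_{2m}q=5F_{2m}u$ rewritten in quadratic-formula form, so the two arguments coincide step for step; note that the paper, like you, does not actually prove completeness of the solutions to the $+4$ Pell equation (its Lemma \ref{Pell} covers only the $-4$ case), so your flagged ``technical heart'' is no worse a gap than the original.
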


Similarly, we  call those  solutions of (\ref{L1eq}) or (\ref{L2eq})  which are not the product of two Lucas primes exceptional solutions. From this two theorems and by Mathematica, we figure out that for (\ref{L1eq}) and (\ref{L2eq}), there are no exceptional solutions for $1\le m\le 5$.  The following table gives the known solutions of (\ref{L1eq}) with $1 \le m \le 5$.

\begin{table}[h]
\centering
\begin{tabular}{|c|c|c| }
\hline
  $m$ &  $L_{2m}n+(L_{2m}^{2}-3)$ &   $n$
\\ \hline
  1 & $3n+6$    & $L_{5}L_{7}, L_{11}L_{13}, L_{17}L_{19}$    \\
\hline
  2 & $7n+46$   & ${{L}_{7}}{{L}_{11}},{{L}_{13}}{{L}_{17}}, {{L}_{37}}{{L}_{41}}, {{L}_{613}}{{L}_{617}}$                                            \\
\hline
3  &  {$18n+321$}  &  ${{L}_{5}}{{L}_{11}}, {{L}_{7}}{{L}_{13}}, {{L}_{11}}{{L}_{17}}, {{L}_{13}}{{L}_{19}}, {{L}_{31}}{{L}_{37}},$  \\
  &  & $ {{L}_{41}}{{L}_{47}},   {{L}_{47}}{{L}_{53}}, {{L}_{4787}}{{L}_{4793}} $ \\
\hline
\end{tabular}
\caption{Solutions of (\ref{L1eq}) for $1 \le m \le  3$.}
\label{L1solution}
\end{table}
The two solutions $n = L_{613} L_{617}$ and $L_{4787} L_{4793}$ in above table have 258 and 2003 digits, respectively.

Finally, we have
\begin{thm}\label{twin}
Let $A$ and $k$ be positive integers, consider the equation
\begin{equation}\label{twineq}
{{\sigma }_{2}}(n)-{{n}^{2}}=An+({{k}^{2}}+1).
\end{equation}
$\mathrm{(i)}$  If $A\ne 2$ or $k$ is odd,  then $(\ref{twineq})$ has only finitely many solutions; \\
$\mathrm{(ii)}$  If $A=2$ and $k$ is even, then except for finitely many computable solutions in the range $n<{{(|A|+{{k}^{2}}+1)}^{3}}$,  all the solutions of $(\ref{twineq})$ are $n=p(p+k)$, where both $p$ and $p+k$ are primes.
\end{thm}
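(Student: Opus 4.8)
The plan is to feed (\ref{twineq}) into Theorem \ref{ABthm} and then solve the resulting two‑variable quadratic case by case according to the value of $A$. Since $k$ is a positive integer, here $B=k^{2}+1\ge 2$, so $(A,B)$ is neither $(0,1)$ nor $(1,1)$ and Theorem \ref{ABthm}(iii) applies verbatim: apart from finitely many computable solutions with $n\le(|A|+|B|)^{3}=(A+k^{2}+1)^{3}$, every solution has the form $n=pq$ with $p<q$ primes, and substituting $B=k^{2}+1$ into (\ref{pqeq}) turns the constraint into
\[
p^{2}+q^{2}-k^{2}=Apq .
\]
The whole theorem then reduces to classifying the prime pairs $p<q$ solving this equation.

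The two easy cases come first. For $A=2$ the equation is $(q-p)^{2}=k^{2}$, i.e.\ $q=p+k$, so the solutions are exactly $n=p(p+k)$ with $p$ and $p+k$ prime; when $k$ is odd the primes $p$ and $p+k$ have opposite parity, forcing $p=2$ and leaving at most the single candidate $n=2(k+2)$, which both settles the $A=2$ portion of (i) and explains the hypothesis ``$k$ even'' in (ii). For $A=1$ I would rewrite the equation as $(q-p)^{2}+pq=k^{2}$, so that $pq\le k^{2}$ bounds both primes and only finitely many solutions survive. This disposes of every case in (i) except $A\ge 3$.

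The delicate case is $A\ge 3$, and I would attack it by playing a size estimate against a divisibility condition. Dividing $p^{2}+q^{2}-k^{2}=Apq$ by $pq$ gives $A\le q/p+p/q$, whence $q/p\ge(A+\sqrt{A^{2}-4})/2>2$, so $q>2p$. Reducing the same equation modulo $q$ gives $q\mid p^{2}-k^{2}=(p-k)(p+k)$. If $p+k<q$, then both factors are smaller than $q$ in absolute value, and since $q$ is prime this forces $p=k$; but $p=k$ collapses the equation to $q=Ap$, which is composite for $A\ge 3$, a contradiction. Hence every solution satisfies $q\le p+k$, and together with $q>2p$ this yields $2p<p+k$, i.e.\ $p<k$. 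Thus $p$ is bounded, each admissible $p$ gives at most two values of $q$ from the quadratic, and finiteness in (i) follows for $A\ge 3$.

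I expect the genuine obstacle to sit precisely in this $A\ge 3$ case: the curve $p^{2}-Apq+q^{2}=k^{2}$ is a Pell‑type hyperbola carrying infinitely many integer points (for $A=3,\,k=1$ these are the consecutive even‑indexed Fibonacci numbers, all composite), so finiteness cannot come from the conic itself and must be extracted from primality. The decisive observation is that once $p$ exceeds $k$ the divisibility $q\mid p^{2}-k^{2}$ is incompatible with the forced growth $q>2p$. No such obstruction appears in part (ii): for $A=2$ and $k$ even, every prime pair with $q=p+k$ genuinely solves the equation, so (ii) is an exact characterization $n=p(p+k)$ (recovering, for $k=2$, the twin‑prime link in the abstract), the only residual bookkeeping being the finitely many small solutions below $(|A|+k^{2}+1)^{3}$ already isolated by Theorem \ref{ABthm}.
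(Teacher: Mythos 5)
Your proof is correct, and it reaches the same reduction as the paper --- Theorem \ref{ABthm}(iii) turns every large solution into a prime pair $p<q$ with $p^{2}+q^{2}-k^{2}=Apq$ --- but from that point the two arguments are organized differently. The paper runs a single trichotomy on $p$ versus $k$, uniform in $A$: from $q\mid(p-k)(p+k)$ it shows that $p=k$ and $p<k$ both force $n$ below the computable bound, while $p>k$ forces $q=p+k$, whereupon $A=(p^{2}+q^{2}-k^{2})/pq=2$ drops out automatically and parity disposes of odd $k$; so in case (i) there are no large solutions at all, and $A=2$ emerges as a conclusion rather than a hypothesis to be split on. You split on $A$ instead: for $A=2$ you avoid divisibility entirely by factoring the conic as $(q-p)^{2}=k^{2}$, which gives the exact characterization $q=p+k$ in one line; for $A=1$ you bound $pq\le k^{2}$; and only for $A\ge 3$ do you invoke $q\mid(p-k)(p+k)$, paired with the growth estimate $q/p\ge(A+\sqrt{A^{2}-4})/2>2$, which the paper never needs. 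The trade-off: the paper's argument is shorter and more uniform, whereas yours isolates the conceptual point that for $A\ge 3$ the hyperbola's forced growth $q>2p$ is incompatible with the divisibility-forced $q\le p+k$ unless $p<k$, so it is primality, not the conic, that kills infinitude --- your Fibonacci example $p^{2}-3pq+q^{2}=1$ makes this vivid. One free sharpening: in your $A\ge 3$ case, $p<k$ and $q\le p+k$ already give $n=pq<2k^{2}$, which is below the computable bound $(|A|+k^{2}+1)^{3}$, so your ``finitely many extra solutions'' can be upgraded to ``none large'', matching the paper's cleaner formulation of (i).
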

\begin{cor}
For any even integer $k$, there are infinitely many prime pairs $(p, p + k)$ (Polignac's conjecture)  if and only if the equation
\[ \sigma_{2}(n)-n^2=2n+(k^2+1)\]
has infinitely many solutions.
\end{cor}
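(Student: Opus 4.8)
The plan is to deduce the corollary directly from Theorem \ref{twin}(ii), which is precisely the case $A=2$ with $k$ even. The starting point is the elementary identity for a product of two distinct primes: if $n=pq$, then $\sigma_2(n)=(1+p^2)(1+q^2)$, so that
\[
\sigma_2(n)-n^2 = 1+p^2+q^2.
\]
Equating this with $2pq+(k^2+1)$ and simplifying yields $(q-p)^2=k^2$; since $k$ is a positive even integer and we may assume $p<q$, this is equivalent to $q=p+k$. Hence $n=p(p+k)$ solves the equation exactly when both $p$ and $p+k$ are prime.

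For the forward implication (Polignac's conjecture $\Rightarrow$ infinitely many solutions), I would note that every prime pair $(p,p+k)$ yields a solution $n=p(p+k)$ via the identity above, and that distinct pairs yield distinct $n$: by unique factorization the set $\{p,p+k\}$ is recovered from $n$, and $k$ is fixed, so $p$ is determined. Therefore infinitely many prime pairs give infinitely many solutions.

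For the converse (infinitely many solutions $\Rightarrow$ Polignac's conjecture), I would invoke Theorem \ref{twin}(ii): apart from finitely many computable exceptional solutions confined to the range $n<(|A|+k^2+1)^3$, every solution has the form $n=p(p+k)$ with $p$ and $p+k$ prime. Deleting the finitely many exceptions, an infinite family of solutions forces infinitely many $n$ of this form, and the injectivity noted above then delivers infinitely many prime pairs $(p,p+k)$.

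I expect essentially no obstacle, since all the real content is already carried by Theorem \ref{twin}(ii); the entire argument reduces to the one-line divisor computation together with a bookkeeping check. The only point needing a moment's care is in the converse: one must confirm that the finitely many exceptional solutions cannot absorb an infinite family, which holds because they are confined to an explicit finite range.
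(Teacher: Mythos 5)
Your proposal is correct and matches the paper's intent exactly: the paper states this corollary without proof as an immediate consequence of Theorem \ref{twin}(ii), whose own proof already contains both ingredients you use (the reduction of solutions to $n=p(p+k)$ with both factors prime, and the converse observation that any such $n$ is a solution). Your added details---the identity $\sigma_2(pq)-(pq)^2=1+p^2+q^2$, the injectivity of the map from prime pairs to solutions, and the remark that finitely many exceptional solutions cannot account for an infinite family---are exactly the routine bookkeeping the paper leaves implicit.
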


In particular,  twin primes conjecture holds if and only if the equation
\[{{\sigma }_{2}}(n)-{{n}^{2}}=2n+5\]
has infinitely many solutions.


\section{Proofs of the Theorems}

In order to prove the theorems, we need the following two Lemmas.
\begin{lem}[cf. \cite{identity}]\label{FLid}
Let $m$, $n$ be  nonnegative integers and $n\ge m$, then \\
$\mathrm{(i)}$    $5F_{n}^{2}+4{{(-1)}^{n}}=L_{n}^{2}.$ \\
$\mathrm{(ii)}$   $L_{m}L_{n}+5F_{m}F_{n}=2L_{m+n}.$ \\
$\mathrm{(iii)}$  $F_{n}L_{m}=F_{n+m}+{{(-1)}^{m}}F_{n-m}.$ \\
$\mathrm{(iv)}$  $L_{n}F_{m}=F_{n+m}-{{(-1)}^{m}}F_{n-m}.$ \\
$\mathrm{(v)}$  $5{{F}_{m}}{{F}_{n}}={{L}_{m+n}}-{{(-1)}^{m}}{{L}_{n-m}}.$
\end{lem}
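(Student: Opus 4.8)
The plan is to derive all five identities uniformly from the Binet formulas
$$F_n = \frac{\alpha^n - \beta^n}{\alpha - \beta}, \qquad L_n = \alpha^n + \beta^n,$$
where $\alpha = (1+\sqrt 5)/2$ and $\beta = (1-\sqrt 5)/2$ are the roots of $x^2 = x+1$. First I would record the three basic facts that drive every computation: $\alpha\beta = -1$, $(\alpha-\beta)^2 = 5$, and consequently $\alpha^m\beta^m = (\alpha\beta)^m = (-1)^m$. With these in hand each part reduces to a one-line expansion.

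For (i) and (ii) I would simply expand the relevant squares and products. Writing $L_n^2 = \alpha^{2n} + 2(\alpha\beta)^n + \beta^{2n}$ and $5F_n^2 = (\alpha-\beta)^2 F_n^2 = (\alpha^n-\beta^n)^2 = \alpha^{2n} - 2(\alpha\beta)^n + \beta^{2n}$, subtracting gives $L_n^2 - 5F_n^2 = 4(\alpha\beta)^n = 4(-1)^n$, which is (i). For (ii), expanding $L_m L_n$ and $5F_m F_n = (\alpha^m-\beta^m)(\alpha^n-\beta^n)$ and adding cancels the mixed terms $\alpha^m\beta^n + \alpha^n\beta^m$, leaving $2(\alpha^{m+n}+\beta^{m+n}) = 2L_{m+n}$.

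The remaining three are handled the same way, the only point requiring care being the bookkeeping of the cross terms. For (iii), expanding $F_n L_m$ produces $\alpha^{n+m}-\beta^{n+m}$ together with $\alpha^n\beta^m - \alpha^m\beta^n = (\alpha\beta)^m(\alpha^{n-m}-\beta^{n-m})$; dividing by $\alpha-\beta$ and using $(\alpha\beta)^m=(-1)^m$ turns this into $F_{n+m} + (-1)^m F_{n-m}$. Identity (iv) is identical except that the cross terms enter with the opposite sign, yielding the minus; and (v) follows by expanding $5F_m F_n$, where the mixed part $\alpha^m\beta^n + \alpha^n\beta^m = (\alpha\beta)^m(\alpha^{n-m}+\beta^{n-m}) = (-1)^m L_{n-m}$ combines with $\alpha^{m+n}+\beta^{m+n}=L_{m+n}$.

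Honestly there is no serious obstacle here: all five are classical, and the hypothesis $n \ge m$ is needed only so that $F_{n-m}$ and $L_{n-m}$ carry their usual nonnegative indices. The single place to be attentive is tracking the sign $(-1)^m$ coming from $(\alpha\beta)^m$ in (iii)--(v), which is exactly why the two ``plus/minus'' identities (iii) and (iv) come out looking so similar. An alternative entirely elementary route would be induction on $n$ using the recurrences together with $L_n = F_{n-1}+F_{n+1}$ and $5F_n = L_{n+1}+L_{n-1}$, but the Binet computation is shorter and makes the underlying symmetry transparent, so that is the route I would take.
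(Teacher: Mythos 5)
Your derivations are all correct: with $\alpha\beta=-1$ and $(\alpha-\beta)^{2}=5$ in hand, each of the five expansions you describe goes through exactly as stated, and the cross-term bookkeeping in (iii)--(v) produces precisely the signs $\pm(-1)^{m}$ claimed. The point worth flagging is that the paper does not prove this lemma at all: it is stated as a known result, cited to a compilation of Fibonacci--Lucas formulae, and then used as a black box in the proofs of Theorems \ref{F1}, \ref{L1} and \ref{L2}. So your Binet-formula argument is not an alternative to the paper's proof so much as a replacement for a citation; it makes the paper self-contained at essentially no cost, since all five identities collapse to one-line computations once the three facts $\alpha\beta=-1$, $(\alpha-\beta)^{2}=5$ and $(\alpha\beta)^{m}=(-1)^{m}$ are recorded. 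Your reading of the hypothesis $n\ge m$ is also accurate: it serves only to keep the index $n-m$ nonnegative in (iii)--(v), and plays no role in the algebra itself.
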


\begin{lem}[cf. Example 4, page 51 of \cite{Titu}]\label{Pell}
Suppose that $a$ and $b$ are  positive integers satisfying ${{a}^{2}}-5{{b}^{2}}=-4$, then there exists an  integer $k \ge 0$ such that $a={{L}_{2k+1}}$ and $b={{F}_{2k+1}}$.
\end{lem}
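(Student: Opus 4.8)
The plan is to treat $a^2-5b^2=-4$ as a Pell-type equation and to combine the explicit identity of Lemma~\ref{FLid}(i) with an infinite descent. First I would observe that putting $n=2k+1$ in Lemma~\ref{FLid}(i) gives $L_{2k+1}^2-5F_{2k+1}^2=4(-1)^{2k+1}=-4$, so every pair $(L_{2k+1},F_{2k+1})$ is a positive solution; the real content of Lemma~\ref{Pell} is that there are no others. The base of the argument is the fundamental solution $(a,b)=(1,1)=(L_1,F_1)$, which is the unique solution with $b=1$ (then $a^2=1$). The goal is to show that every positive solution descends to this one.

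The engine is a norm-preserving transformation. Writing $\phi=\frac{1+\sqrt5}{2}$ and working in $\mathbb{Z}[\phi]$, multiplication by $\phi^2=\frac{3+\sqrt5}{2}$ has norm $1$ and sends a solution $(a,b)$ to $T(a,b)=\left(\frac{3a+5b}{2},\frac{a+3b}{2}\right)$, with inverse $T^{-1}(a,b)=\left(\frac{3a-5b}{2},\frac{3b-a}{2}\right)$. Reducing the equation modulo $2$ forces $a\equiv b\pmod 2$, so these formulas always return integers, and the norm-$1$ factor guarantees $T^{\pm1}$ carry solutions to solutions. I would then run the descent: for a positive solution with $b\ge 2$, the relation $a=\sqrt{5b^2-4}$ yields the estimates $\frac{5}{3}b<a<3b$ (the left inequality is exactly where $b\ge 2$ is used). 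Hence $T^{-1}(a,b)$ is again a positive solution (positivity of the first coordinate uses $a>\frac{5}{3}b$, of the second uses $a<3b$), and since $a>b$ for $b\ge 2$, the new second coordinate $\frac{3b-a}{2}$ is a positive integer strictly smaller than $b$.

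Iterating this strictly decreases $b$ until $b=1$, i.e.\ until the solution $(1,1)$; thus every positive solution equals $T^k(1,1)$ for some $k\ge 0$. It then remains only to identify this orbit: using $L_n+F_n\sqrt5=2\phi^n$ (equivalently, checking via Lemma~\ref{FLid} that $T$ sends $(L_{2k+1},F_{2k+1})$ to $(L_{2k+3},F_{2k+3})$), one gets $T^k(1,1)=(L_{2k+1},F_{2k+1})$, which is the claimed conclusion. The hard part will be the bookkeeping of the descent step, namely confirming that $T^{-1}$ preserves positivity and strictly reduces the solution precisely when $b\ge 2$, so that the process terminates at the unique minimal solution $(1,1)$ rather than leaving the positive cone or cycling. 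This rests entirely on the elementary estimates $\frac{5}{3}b<a<3b$; everything else is the routine verification that $T$ shifts the Fibonacci and Lucas indices by two.
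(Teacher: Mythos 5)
Your proposal is correct, but there is nothing in the paper to compare it against: the authors do not prove Lemma~\ref{Pell} at all, they simply cite it (Example 4, page 51 of \cite{Titu}) and use it as a black box in the proofs of Theorems \ref{F1}--\ref{L2}. Your descent argument is the standard self-contained treatment of the Pell-type equation $a^{2}-5b^{2}=-4$, and all of its key steps check out: the parity constraint $a\equiv b \pmod 2$ makes $T^{\pm 1}$ integral; the map $T$ preserves the form exactly, since $\left(\frac{3a+5b}{2}\right)^{2}-5\left(\frac{a+3b}{2}\right)^{2}=a^{2}-5b^{2}$; the estimates $\frac{5}{3}b<a<3b$ hold precisely when $b\ge 2$ (the lower bound needs $\frac{20}{9}b^{2}>4$, i.e.\ $b\ge 2$, and the upper bound is automatic), so $T^{-1}$ stays in the positive cone; and $a>b$ for $b\ge 2$ gives the strict decrease $\frac{3b-a}{2}<b$, so the descent terminates at the unique minimal solution $(1,1)$. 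The identification of the orbit is also right: from $2L_{n+1}=L_{n}+5F_{n}$ and $2F_{n+1}=F_{n}+L_{n}$ one gets $T(L_{n},F_{n})=(L_{n+2},F_{n+2})$, hence $T^{k}(1,1)=(L_{2k+1},F_{2k+1})$, which matches the verification via Lemma~\ref{FLid}(i) that these pairs are indeed solutions. The only caveat is presentational: what you have written is a plan with the decisive inequalities stated rather than fully displayed computations, but every claim in the plan is true and the gaps are routine algebra, so filling them in would yield a complete proof that makes the paper independent of the external reference.
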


\begin{proof}[\textbf{Proof of Theorem \ref{ABthm}}]
(i) is obvious, since ${{\sigma }_{2}}(n)={{n}^{2}}+1$ if and only if $n$ is a prime.
To prove (ii) and (iii), we  might assume $(A,B)\ne (0,1)$.

 Suppose $n>{{(|A|+|B|)}^{3}}$ is a solution of (\ref{general}).

If $n=abc$ where $1<a<b<c$ are positive integers. By the arithmetic-geometric mean inequality, we have
\[{{\sigma }_{2}}(n)-{{n}^{2}}\ge {{a}^{2}}{{b}^{2}}+{{b}^{2}}{{c}^{2}}+{{c}^{2}}{{a}^{2}}\ge 3{{({{a}^{4}}{{b}^{4}}{{c}^{4}})}^{1/3}}=3{{n}^{\frac{4}{3}}}.\]
Thus
\[{{\sigma }_{2}}(n)-{{n}^{2}} \ge 3{{n}^{4/3}}>n\cdot {{n}^{1/3}}>n(|A|+|B|)\ge An+B.\]
A contradiction! Hence  $n$ couldn't be a  product of three distinct positive integers which are greater than 1.

Let $\omega (n)$ denotes the number of distinct prime factors of $n$. If $\omega (n)\ge 3$, then obviously we can write $n=abc$ where $1<a<b<c$. A contradiction! From now on, we suppose $\omega (n)\le 2$.

\textbf{Case 1:} If $\omega (n)=1$, write $n={{p}^{\alpha }}$ where $p$ is a prime. Since $n=p\cdot {{p}^{2}}\cdot {{p}^{\alpha -3}}$, we must have $\alpha \le 5$.

If $\alpha =1$, (\ref{general}) becomes $Ap+B=1$, which  has at most one solution $p=(1-B)/A.$

If $\alpha =2$,  from (\ref{general}) we deduce that ${{p}^{2}}(1-A)=B-1$. If $A=1$ and $B=1$, there are infinitely many solutions $n={{p}^{2}}$, where $p$ could be  any prime.  If $A=1$ and $B\ne 1$, there are no solutions. If $A\ne 1$, then  (\ref{general}) has at most one solution $n={{p}^{2}}=(B-1)/(1-A).$

If $3\le \alpha \le 5$, from (\ref{general}) we deduce that  $p(p+{{p}^{3}}+\cdots +{{p}^{2\alpha -3}}-A{{p}^{\alpha -1}})=B-1$. Note that
\[p+{{p}^{3}}+\cdots +{{p}^{2\alpha -3}}-A{{p}^{\alpha -1}}\equiv p \, (\bmod \, {{p}^{2}}),\]
 hence $p+{{p}^{3}}+\cdots +{{p}^{2\alpha -3}}-A{{p}^{\alpha -1}}\ne 0$, and  $p^{2}|B-1$. If $A=0,B=2$ or $3$, it is easy to see that (\ref{general}) has no solutions. For other situations, we know  the possible solution is
$n={{p}^{\alpha }}$ with $3\le \alpha \le 5$ and ${{p}^{2}}|B-1$,  but then  $n=p^{\alpha} \le p^{5} \le {{(1+|B|)}^{5/2}}\le {{(|A|+|B|)}^{3}}$. A contradiction!

\textbf{Case 2:} If $\omega (n)=2$, we write $n={{p}^{\alpha }}{{q}^{\beta }}$. If $\alpha \ge 3$, then $n=p\cdot {{p}^{\alpha -1}}\cdot {{q}^{\beta }}$.  A contradiction! Therefore $\alpha \le 2$, with the same reason,  $\beta \le 2$.

If $(\alpha ,\beta )=(2,2)$, we can write $n=p\cdot q\cdot pq$.  A contradiction!

If $(\alpha ,\beta )=(1,2)$, then
\[{{\sigma }_{2}}(n)-{{n}^{2}}=1+{{p}^{2}}+{{p}^{2}}{{q}^{2}}+{{q}^{2}}+{{q}^{4}}=Ap{{q}^{2}}+B\le (|A|+|B|)p{{q}^{2}},\]
this implies $p<|A|+|B|,{{q}^{2}}<(|A|+|B|)p$,  hence
\[n=p{{q}^{2}}<(|A|+|B|)^{2}p<{{(|A|+|B|)}^{3}}.\]
A contradiction!

Similarly if $(\alpha ,\beta )=(2,1)$, we have  $n<{{(|A|+|B|)}^{3}}.$

Finally, if $(\alpha ,\beta )=(1,1)$, (\ref{pqeq}) follows immediately from (\ref{general}), with this we prove  (iii).

From the above argument, for $(A,B)=(1,1)$, it's easy to see that (\ref{pqeq}) has no solutions. Hence if $n > 8$, all the solutions of ${{\sigma }_{2}}(n)-{{n}^{2}}=n+1$ are $n=p^2$ with $p$ a prime. If $n \le 8$, the only solution is $n=4$,  with this we prove (ii).
\end{proof}

To prove Theorems \ref{F1}-\ref{L2}, we first observe that they are actually different assignment of $(A,B)$ in (\ref{general}). By Theorem \ref{ABthm} we know that except for those computable solutions $n \le {{(|A|+|B|)}^{3}}$, all the solutions are of the form $n=pq$, where $p$, $q$ are distinct primes satisfying (\ref{pqeq}). Note that (\ref{pqeq}) is equivalent to
\begin{equation}\label{cond}
{{(2p-Aq)}^{2}}-({{A}^{2}}-4){{q}^{2}}=4(B-1).
\end{equation}

\begin{proof}[\textbf{Proof of Theorem \ref{F1}}]
Taking $A={{L}_{2m}}$, $B=-F_{2m}^{2}+1$ in (\ref{cond}), then
\[{{(2p-{{L}_{2m}}q)}^{2}}-(L_{2m}^{2}-4){{q}^{2}}=-4F_{2m}^{2}.\]
By (i) of Lemma \ref{FLid}, the above equation becomes ${{(2p-{{L}_{2m}}q)}^{2}}-5F_{2m}^{2}{{q}^{2}}=-4F_{2m}^{2},$ this implies ${{F}_{2m}}|2p-{{L}_{2m}}q$. Write $2p-{{L}_{2m}}q=u{{F}_{2m}}$, where $u$ is an integer, then we deduce that ${{u}^{2}}-5{{q}^{2}}=-4$. By Lemma \ref{Pell} we have $(u,q)=(\pm {{L}_{2k+1}},{{F}_{2k+1}})$ for some nonnegative integer $k.$

\textbf{Case 1:}  $(u,q)=({{L}_{2k+1}},{{F}_{2k+1}})$,  then $p=\frac{1}{2}({{L}_{2m}}{{F}_{2k+1}}+{{L}_{2k+1}}{{F}_{2m}}).$
By (iii) and (iv) of Lemma \ref{FLid} we have $p={{F}_{2k+1+2m}}.$ Hence $n={{F}_{2k+1}}{{F}_{2k+2m+1}}$, where both $F_{2k+1}$ and $F_{2k+2m+1}$ are  primes.

\textbf{Case 2:} $(u,q)=(-{{L}_{2k+1}},{{F}_{2k+1}})$, then $p=\frac{1}{2}({{L}_{2m}}{{F}_{2k+1}}-{{L}_{2k+1}}{{F}_{2m}}).$

If $2k+1>2m$,  by (iii) and (iv) of Lemma \ref{FLid} we have  $p={{F}_{2k+1-2m}}$. Hence $n={{F}_{2k+1}}{{F}_{2k+1-2m}}.$

If $2k+1<2m$,  by (iii) and (iv) of Lemma \ref{FLid} we have $p={{F}_{2m-2k-1}}$. Hence $n={{F}_{2k+1}}{{F}_{2m-2k-1}}.$
\end{proof}

\begin{proof}[\textbf{Proof of  Theorem \ref{L1}}]
Taking $A={{L}_{2m}}$, $B=L_{2m}^{2}-3$ in (\ref{cond}), then
\begin{equation}\label{mid}
{{(2p-{{L}_{2m}}q)}^{2}}-(L_{2m}^{2}-4){{q}^{2}}=4(L_{2m}^{2}-4).
\end{equation}
By (i) of Lemma \ref{FLid}, we have $L_{2m}^{2}-4=5F_{2m}^{2}$, hence it follows from (\ref{mid}) that  $2p-{{L}_{2m}}q=5{{F}_{2m}}u$ for some integer $u$.  Then (\ref{mid}) becomes ${{q}^{2}}-5{{u}^{2}}=-4.$ By Lemma \ref{Pell} we have  $(q,u)=({{L}_{2k+1}},\pm {{F}_{2k+1}})$ for some integer $k\ge 0$.

\textbf{Case 1: }$(q,u)=({{L}_{2k+1}},{{F}_{2k+1}})$,  by (ii) of Lemma \ref{FLid} we have
$p=\frac{1}{2}({{L}_{2m}}{{L}_{2k+1}}+5{{F}_{2m}}{{F}_{2k+1}})={{L}_{2m+2k+1}}.$  Hence,  $n={{L}_{2k+1}}{{L}_{2k+2m+1}}.$

\textbf{Case 2: }$(q,u)=({{L}_{2k+1}},-{{F}_{2k+1}})$, then
\[p=\frac{1}{2}({{L}_{2m}}{{L}_{2k+1}}-5{{F}_{2m}}{{F}_{2k+1}})=\frac{1}{2}({{L}_{2m}}{{L}_{2k+1}}+5{{F}_{2m}}{{F}_{2k+1}})-5{{F}_{2m}}{{F}_{2k+1}}.\]

If $2m>2k+1$,  by  (ii) and (v) of Lemma \ref{FLid} we have  $p=-{{L}_{2m-2k-1}}$, which is impossible!

If $2m<2k+1$,  by (ii) and (v) of  Lemma \ref{FLid} we have $p={{L}_{2k+1-2m}}$, hence $n={{L}_{2k+1}}{{L}_{2k+1-2m}},$ which is the same as in Case 1.
\end{proof}

\begin{proof}[\textbf{Proof of Theorem \ref{L2}}]
Taking  $A={{L}_{2m}}$ and $B=5-L_{2m}^{2}$ in  (\ref{cond}), then
\[{{(2p-{{L}_{2m}}q)}^{2}}-(L_{2m}^{2}-4){{q}^{2}}=4(4-L_{2m}^{2}).\]
By (ii) of Lemma \ref{FLid},  we have $L_{2m}^{2}-4=5F_{2m}^{2}$, thus we can write $2p-{{L}_{2m}}q=5{{F}_{2m}}u$ for some integer $u$. The above equation becomes ${{q}^{2}}-5{{u}^{2}}=4$. Solving this Pell-type equation we deduce that $(q,u)=({{L}_{2k}},\pm {{F}_{2k}})$ for some  integer $k \ge 0$.

\textbf{Case 1:} $(q,u)=({{L}_{2k}},{{F}_{2k}})$. By Lemma \ref{FLid} we have $p=\frac{1}{2}({{L}_{2m}}{{L}_{2k}}+5{{F}_{2m}}{{F}_{2k}})={{L}_{2m+2k}}$, hence   $n={{L}_{2k}}{{L}_{2k+2m}}$.

\textbf{Case 2: }$(q,u)=({{L}_{2k}},-{{F}_{2k}})$, then
\[p=\frac{1}{2}({{L}_{2m}}{{L}_{2k}}-5{{F}_{2m}}{{F}_{2k}})=\frac{1}{2}({{L}_{2m}}{{L}_{2k}}+5{{F}_{2m}}{{F}_{2k}})-5{{F}_{2m}}{{F}_{2k}},\]

If $m>k$, then by Lemma \ref{FLid} we have $p={{F}_{2m-2k}}$, and hence $n={{F}_{2m-2k}}{{F}_{2k}}$, note that $p,q$ are distinct primes, therefore $k\ne \frac{m}{2}.$

If $m\le k$, then by Lemma \ref{FLid} we have $p={{F}_{2k-2m}}$, and hence $n={{F}_{2k-2m}}{{F}_{2k}}$.
\end{proof}

\begin{proof}[\textbf{Proof of Theorem \ref{twin}}]
Without loss of generality we  assume $k\ge 0$. Let $n>{{(|A|+{{k}^{2}}+1)}^{3}}$ be a solution of (\ref{twineq}), it follows from (iii) of Theorem \ref{ABthm} that $n=pq$, where $p, q$ are distinct primes satisfying ${{p}^{2}}+{{q}^{2}}-{{k}^{2}}=Apq$. Let $p <q$,  note that $q|(p-k)(p+k)$.

If $p=k$, then we have $q=Ak$ and $n=A{{k}^{2}}<{{(|A|+{{k}^{2}}+1)}^{3}}$.  A contradiction!

If $p<k$, then $p+k<2k$.  Since $p<q$,  we must have $q|p+k$ and hence $n<2{{k}^{2}}<{{(|A|+{{k}^{2}}+1)}^{3}}$. A contradiction!

If $p>k$, then $q|p+k$, note that $2q>2p>p+k$, it must be  $q=p+k$. Thus  $A=({{p}^{2}}+{{q}^{2}}-{{k}^{2}})/pq=2$ and $n=p(p+k)$. For $p\ge 3$,   $k$ must be even.

Conversely, if $A=2$, $k$ is even and both $p$ and $p+k$ are primes,  then  it's easy to see that $n=p(p+k)$ is a solution of (\ref{twineq}).
\end{proof}

Finally, we raise the following

\textbf{Question.} Let $a,b$ and $c$ be given integers, when does the  equation
\[{{\sigma }_{2}}(n)=a{{n}^{2}}+bn+c\]
has infinitely many solutions and how to solve this equation?

 We have discussed the case $a=1$. For $a \ge 2$, it's still unknown.


\end{document}